\newtheorem{theorem}{Theorem}[section]
\newtheorem{lemma}[theorem]{Lemma}
\newtheorem{remark}[theorem]{Remark}
\setlist[enumerate]{itemsep=3pt}
\setlist[itemize]{itemsep=3pt}
\def \T  {\mathbb{T}} 
\def \R {\mathbb{R}}  
\def \N {\mathbb{N}}  
\def \p {\partial}
\def \ep {\epsilon}
\def \om {\omega}
\numberwithin{equation}{section}
\begin{document}

\title[Inviscid damping  in Sobolev spaces]{Long time inviscid damping near Couette in Sobolev spaces}

\author{Dengjun Guo}

\address{Academy of Mathematics and Systems Science, Chinese Academy of Sciences, Beijing, China}

\email{djguo@amss.ac.cn}

\author{Xiaoyutao Luo}

\address{Academy of Mathematics and Systems Science, Chinese Academy of Sciences, Beijing, China}

\email{xiaoyutao.luo@amss.ac.cn}


\keywords{Inviscid damping, Couette flow, 2D Euler equations}
\date{\today}

\begin{abstract}
	We give an elementary proof of long time inviscid damping  for Sobolev perturbations near the Couette flow $(y,0)$ for the 2D Euler equations on $\T \times \R$. For any $s>1$ and any initial vorticity perturbation of size $O(\ep)$ in $H^s$, we obtain velocity damping estimates  up to a time scale   $ t = O(\ep^{-\delta_s} )$, where $\delta_s=1/3$ when $s\to 1+$ and $\delta_s=1/2$ for $s>2$. 
\end{abstract}

\maketitle

\section{Introduction}\label{sec:intro}

In this note, we consider the 2D Euler equation on $\T \times \R$ near the Couette flow $(y,0)$. The vorticity perturbation $\om : \T \times \R \to \R$ satisfies 
\begin{equation}\label{eq:eu_eq}
\begin{cases}
	\p_t \om + y\p_x \om + u \cdot \nabla \om = 0 &\\
	u = \nabla^\perp \psi, \quad \Delta \psi = \om  &\\
	\om|_{t= 0 } = \om_{in} &
\end{cases}	
\end{equation}
where    $ \nabla^\perp = (- \p_y , \p_x)$ and $\om_{in}$ is the initial data.

We are concerned with the long-time behavior of solutions to this system, specifically the phenomenon of inviscid damping~\cite{Kelvin1887,Rayleigh1879,Orr1907}--the decay of velocity perturbations in the absence of viscosity.

For the Couette flow, the linearized dynamics are well understood: phase mixing transfers energy to small scales in the vorticity, leading to algebraic decay of the velocity.

 At the nonlinear level, the situation is subtler. The breakthrough work of Bedrossian and Masmoudi \cite{MR3415068} established nonlinear inviscid damping for small perturbations in Gevrey class regularity. Important generalizations to more complex shear flows were obtained in subsequent works \cite{MR4740211,MR4628607}.

In the studies \cite{MR3415068,MR4076093,MR4740211,MR4628607}, inviscid damping follows from asymptotic stability, which requires Gevrey-2  regularity. This assumption appears to be sharp-- below Gevrey-2, instability can occur~\cite{MR4630602}.

 For Sobolev regularity, global inviscid damping is believed to fail. In particular, there exist steady states~\cite{MR2796139} and traveling waves~\cite{MR4595614} arbitrarily close to Couette flow in $H^{3/2-}$, which prohibits the global damping in such a regime.

\subsection{Main result}

The purpose of this note is to show that although global-in-time inviscid damping near Couette does not hold for Sobolev perturbations in $H^{3/2-}$, a \emph{finite but long-time} version of damping remains valid.

\begin{theorem}\label{thm:main}
For $s >1 $ and $\delta>0$ there exist $c_s>0$ and $\ep_0>0$ with the following. For any $0< \ep \leq \ep_0$ and any zero-mean initial data $\om_{in} \in H^s(\T \times \R)$ such that 
\begin{equation}\label{eq thm main 1}
\| \om_{in}\|_{H^{s} }  \leq \ep \quad \text{and} \quad \| y \om_{in}\|_{L^{2} }  \leq \ep ,
\end{equation}
define the lifespan  
\begin{equation}\label{eq thm main 2}
T_\ep: = c_s \ep^{-\delta_s }  \quad \text{with}  \quad  \delta_s = 
    \begin{cases}
  \frac{1}{4-s}  \quad\text{if $1<s<2 $  } &\\
\frac{1}{2+\delta}    \quad\text{ if $s=2 $  } &\\
\frac12 \quad\text{if $s > 2 $  }. & 
\end{cases}
\end{equation} 

 Then the (unique) solution to \eqref{eq:eu_eq} satisfies the following.

\begin{itemize}
		
	\item (Regularity):
	The re-normalized vorticity $W( t, X,Y): = \omega(t,X + Yt, Y)$ satisfies
	\begin{eqnarray}\label{eq:reg intro}
		\| W \|_{L^\infty_t ([0,T_\ep]; H^s)} \leq 3 \ep .
	\end{eqnarray}

	\item (Invisicid damping): The velocity field $u = (u^x, u^y )$ satisfies
	\begin{equation}\label{eq:damping intro}
		\begin{aligned}
			\|P_{\neq 0} u^x (t)\|_{L^2} & \lesssim \ep \langle t \rangle^{-1} \\
			\|  u^y (t) \|_{L^2} & \lesssim \ep \langle t \rangle^{- \min \{ 2,s\}}  
		\end{aligned}
		\quad \text{for all $t \in [0, T_\ep] $ }
	\end{equation}
	where $P_{\neq 0}f : = f -\int_{\T } f(x,y) \, dx $ denotes the projection of removing the $x$-mean.

\end{itemize}
 
\end{theorem}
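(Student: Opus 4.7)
The plan is to work in the renormalized coordinates $W(t,X,Y) := \omega(t,\, X+Yt,\, Y)$, in which \eqref{eq:eu_eq} becomes $\p_t W + \nabla^\perp_{X,Y} \Psi \cdot \nabla_{X,Y} W = 0$ with $\Delta_t \Psi = W$ and the deformed Laplacian $\Delta_t := \p_X^2 + (\p_Y - t\p_X)^2$. I would set up a bootstrap on an interval $[0,T^*]$ with hypothesis $\|W(t)\|_{H^s} + \|YW(t)\|_{L^2} \le A\ep$, aiming to improve to $A\ep/2$. The damping bounds \eqref{eq:damping intro} follow directly from the explicit Fourier symbol $\hat\Psi(t,k,\eta) = -\hat W/(k^2 + (\eta-kt)^2)$ for $k\neq 0$, by splitting the multiplier into the regimes $|\eta|\lesssim |k|t$ (large denominator, algebraic decay in $t$) and $|\eta| \gtrsim |k|t$ (where $(k^2+\eta^2)^s \gtrsim (|k|t)^{2s}$ absorbs the loss).

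The core of the argument is the $H^s$ energy estimate for $W$. Since $U := \nabla^\perp \Psi$ is divergence-free, the top-order transport cancels and a Kato-Ponce commutator produces the schematic inequality
\begin{equation*}
\frac{d}{dt}\|W\|_{H^s}^2 \lesssim \bigl(\|\nabla U\|_{L^\infty}\|W\|_{H^s} + \|U\|_{H^s}\|\nabla W\|_{L^\infty}\bigr)\|W\|_{H^s}.
\end{equation*}
Of the four entries of $\nabla U$, only $\p_Y^2 \Psi$ fails to enjoy algebraic decay; I would handle it by substituting from $\Delta_t\Psi = W$ to write $\p_Y^2\Psi = W - (1+t^2)\p_X^2\Psi + 2t\,\p_X\p_Y\Psi$, then combining with the 2D Sobolev embedding $L^\infty \hookrightarrow H^{1+\delta}$ and the Fourier bounds $\|\p_X^2\Psi\|_{H^{1+\delta}} + \|\p_X\p_Y\Psi\|_{H^{1+\delta}} \lesssim \ep\langle t\rangle^{-(s-1-\delta)}$. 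This yields $\|\nabla U\|_{L^\infty} \lesssim \ep\langle t\rangle^{\max(0,\,3-s+\delta)}$, while a parallel Fourier calculation (at resonance $\eta = kt$) gives $\|U\|_{H^s} \lesssim \ep\langle t\rangle$. Substituting into the energy inequality and applying Gr\"onwall produces $\|W(T)\|_{H^s} \lesssim \ep\exp\bigl(C\ep T^{4-s+\delta} + C\ep T^2\bigr)$, and demanding the exponent to remain $O(1)$ yields exactly $T_\ep \sim \ep^{-\delta_s}$ in each of the three ranges of the theorem, with the first (resp.\ second) term dictating the rate for $s \le 2$ (resp.\ $s>2$).

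The moment $\|YW\|_{L^2}$ is closed independently via $\tfrac{d}{dt}\|YW\|_{L^2} \lesssim \|u^y\|_{L^\infty}\|W\|_{L^2}$ together with the already-established decay of $u^y$; this bound is needed in turn to control the non-damped zero $x$-mode $u^x_0$ in $L^2$ via a Hardy-type inequality. The principal obstacle I anticipate is the Kato-Ponce step when $1 < s \le 2$, since the naive commutator introduces $\|\nabla W\|_{L^\infty}$, which is not controlled by $\|W\|_{H^s}$ in two dimensions for such $s$. My plan is to replace it by a Bony paraproduct splitting in which the low-high paraproduct $T_U\nabla W$ only costs $\|\nabla U\|_{L^\infty}$, while the symmetric term $T_{\nabla W}U$ and the high-high remainder $R(U,\nabla W)$ are handled by bilinear Sobolev estimates that exploit the fact that $U = \nabla^\perp \Delta_t^{-1} W$ gains the $\partial_X$-weighted decay derived in the first step. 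The exponent $\delta_s = 1/(4-s)$ for $1<s<2$ then emerges precisely from the resonant-frequency growth rate $\langle t\rangle^{3-s}$ of $\|\p_Y^2\Psi\|_{L^\infty}$, which is the sharp algebraic balance forced by the linearized dynamics.
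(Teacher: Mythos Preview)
Your outline is essentially the paper's argument: renormalize to $W$, close an energy inequality of the form $\frac{d}{dt}\|W\|_{\bar H^s}\lesssim \langle t\rangle^{\beta_s}\|W\|_{\bar H^s}^2$ (with $\bar H^s$ the norm $\|W\|_{H^s}+\|YW\|_{L^2}$), then read off damping from the frozen-time multiplier. Two points where the paper is sharper or simpler than your plan, the first of which is a genuine (if minor) gap:

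\textbf{(1) The $L^\infty$ bound on $\nabla U$.} Your route through $H^{1+\delta}\hookrightarrow L^\infty$ costs a $\delta$, yielding $\|\nabla U\|_{L^\infty}\lesssim \ep\langle t\rangle^{3-s+\delta}$ and hence only $T_\ep\sim\ep^{-1/(4-s+\delta)}$ for $1<s<2$; this falls short of the stated $\delta_s=1/(4-s)$. The paper removes the loss by interpolating
\[
\|\nabla U\|_{L^\infty}\lesssim \|U\|_{\dot H^{s+1}}^{1/2}\|U\|_{\dot H^{3-s}}^{1/2}
\]
(legitimate in 2D since $2-s<1<s$) and then applying the multiplier bound $\|P_{\neq 0}U\|_{\dot H^{s_1}}\lesssim \langle t\rangle^{1+s_1-s_2}\|W\|_{\dot H^{s_2}}$ at $(s_1,s_2)=(s+1,s)$ and $(3-s,s)$, which gives exactly $\langle t\rangle^{3-s}$.

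\textbf{(2) The Kato--Ponce step for $1<s\le 2$.} You correctly flag that $\|\nabla W\|_{L^\infty}$ is uncontrolled and propose a paraproduct decomposition. The paper sidesteps this entirely by using the $L^p$--$L^q$ form of Kato--Ponce with $p=2/(s-1)$, $q=2/(2-s)$: then Sobolev embedding gives $\|J^s U\|_{L^p}\lesssim \|J^2 U\|_{L^2}$ and $\|\nabla W\|_{L^q}\lesssim \|J^s W\|_{L^2}$, so that
\[
\frac{d}{dt}\|W\|_{H^s}\lesssim \bigl(\|\nabla U\|_{L^\infty}+\|J^2 U\|_{L^2}\bigr)\|W\|_{H^s}
\]
directly. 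The term $\|J^2 U\|_{L^2}$ is handled by $\|\langle Y\rangle W\|_{L^2}+\langle t\rangle^{3-s}\|W\|_{H^s}$ via the same multiplier estimate, and no paraproduct machinery is needed.
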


\begin{remark}
\hfill
\begin{enumerate}
    \item Our nonlinear estimates remain valid down to $H^{1+}$ matching the well-posedness threshold of 2D Euler.  There is no contradiction with steady states\footnote{Note that the examples in \cite{MR2796139} are on the periodic channel $\T \times [-1,1]$, where the stationary structure appears near $y=0$. Nevertheless, the result in this paper is likely to hold on $\T \times [-1 ,1]$  when restricting away from the boundary.   }~\cite{MR2796139} or traveling waves~\cite{MR4595614} constructions, since the initial velocities in those examples are already smaller than the decay rate predicted by \eqref{eq:damping intro} at $t=T_\ep$.

    \item While for  $s> 2$ the decaying rates match with those of the linearized problem, it is not clear to us what would be the optimal life span $T_\ep =\ep^{-\delta_s}$.

    \item The estimates are most effective for initial data of the form $\omega_{in} = \ep \omega_0$ for some fixed profile $ \omega_0$. In the limit $\ep \to 0+$, one sees that initially $ u  \sim  \omega  \sim \ep $ and  $ u $ exhibits the algebraic decay over the time interval $[0,T_\ep]$.

    \item The damping estimate \eqref{eq:damping intro} follows from the regularity \eqref{eq:reg intro}. Whether nonlinear inviscid damping can persist globally without asymptotic stability remains open, see~\cite{MR4302767} for results in the linear case. 

    \item The instability construction in \cite{MR4630602} also satisfies our assumptions and estimates, with an eventual breakdown of \eqref{eq:reg intro} occurring long after $t=T_\ep$. A definite counterexample to inviscid damping for low Gevrey regularity is also open.
\end{enumerate}

\end{remark}

\subsection{Discussion}
The literature on stability/inviscid damping is vast; see, for example, the works  \cite{MR3710645,MR3772399,MR3987441,MR4400903} on general shears/vortices as well as \cite{MR3974608,MR4121130,MR4412070,MR4848788,MR4951440,2510.16378}  on related viscous problems and boundary effects.

In contrast, the study of nonlinear inviscid damping of 2D Euler equations in the Sobolev regime remains largely open.   The present work demonstrates that the inviscid damping remains effective on a long, but finite, time interval, which is much shorter than the nonlinear resonance breakdown time found in recent instability constructions~\cite{MR4630602}. 

Our proof is short and elementary. We work in simplified shear-back coordinates of Bedrossian-Masmoudi~\cite{MR3415068,MR4076093,MR4740211,MR4628607} and exploit the fact that the principle of ``\emph{decay costs regularity}'' can, in some sense, be reversed. In these coordinates, the new velocity field remains one derivative smoother than the re-normalized vorticity, at the cost of growing time factors.

 It is desired to find the optimal exponent $T_\ep\sim  \ep^{-\delta_s}$. However, whether the lifespan obtained here can be extended, with or without additional assumptions, is unclear to us. Another important open question is whether nonlinear inviscid damping persists without the regularity of re-normalized vorticity \eqref{eq:reg intro}. See \cite{MR4302767} for the study of a linear case.

\section{The proof}

We prove Theorem \ref{thm:main} in this section.

\subsection{The setup}

Introduce the shear back coordinates $(x,y) \mapsto (X,Y) \in \T\times \R$,
\begin{equation}
	\begin{cases}
		 x= X +t  Y ,&\\ 
		  y = Y
	\end{cases}
\end{equation} 
The smooth bijection $(x,y) \mapsto (X,Y)$ is isomorphic on $L^p$, $1\leq p \leq \infty$.

Define the shear-back quantities in capital letters. 
\begin{itemize} \item The stream function:
	\begin{equation}
		\Psi(t, X,Y) = \psi(t,X +t  Y , Y ) .
	\end{equation}
	
	\item The vorticity
	\begin{equation}
		W(t, X,Y) = \om(t, X +t  Y , Y ) .
	\end{equation}
\end{itemize}

Note the chain rules $
	\p_t   = \p_t  +  Y  \p_x$,   $
	\p_X       = \p_x $, and 
	$\p_Y     = t   \p_x  + \p_y$.  The equation \eqref{eq:eu_eq} is equivalent to 
\begin{equation}
	\p_t W +  u \cdot \nabla \om = 0 .
\end{equation}

Introduce the re-normalized velocity:
\begin{eqnarray}
	U : = (U^X, U^Y) = (u^x -t u^y, u^y) .
\end{eqnarray}
We check that $U = \nabla^\perp_{X,Y} \Psi$; indeed,
\begin{eqnarray}
 	- \p_Y \Psi = -(t   \p_x  + \p_y) \psi = -t u^y + u^x = U^x \quad \text{and} \quad \p_X \Psi = \p_x   \psi =  u^y .
\end{eqnarray} 
The transport term $u \cdot \nabla \om$ is equivalent to 
\begin{equation}
u\cdot (\p_X W, \p_Y W - t \p_X W) = ( u^x -tu^y )\p_X W + u^y \p_Y W = U \cdot \nabla_{X,Y} W .
\end{equation}

To summarize, we have reformulated \eqref{eq:eu_eq} in re-normalized coordinates $X,Y \in \T \times \R$
\begin{equation}\label{eq:eu_XY}
	\begin{cases}
		\p_t W +    U \cdot \nabla  W = 0 &\\
		U = \nabla^\perp_{X,Y} \Psi, \quad \Delta_t \Psi = W  &\\
	W |_{t= 0 } = \om_{in}(X,Y) &
	\end{cases}	
\end{equation}
where $\Delta_t : = (\p_Y - t \p_X)^2 + \p_X^2 $. As we shall see, this system can be handled by the classical Kato-Ponce inequality~\cite{MR0951744,MR3914540}.

\subsection{A priori  estimates}
To control the evolution, we use the following estimates that allow us to trade $t$ factors for up to one derivative.

Recall the Sobolev norms $\| \cdot \|_{H^s} = \|J^s \cdot \|_{L^2} $ and  $\| \cdot \|_{\dot H^s} = \|\Lambda^s \cdot \|_{L^2} $ where  the multipliers $\widehat{J}^s: = \langle \xi\rangle^s $ and $\widehat{\Lambda}^s: =  |\xi |^s $ for any $s\in \R$.  Here and in what follows, $\langle \cdot \rangle =\sqrt{1+|\cdot|^2}$ is the Japanese bracket.
 
\begin{lemma}\label{lemma:U est}
Suppose the pair $(U, W) $ satisfies  $U=\nabla^\perp \Delta^{-1}_t   W$ with $\int_{\T \times \R} W = 0 $ and $   Y W \in L^2 $.

Then for any $s_1,s_2\in \R$ such that $-1 \leq   s_1- s_2 \leq 1$, there hold
	\begin{equation}\label{eq:lemma_1}
    \begin{aligned} 
		\|   P_{\neq 0} U \|_{ \dot H^{s_1 }  }    &\lesssim   \langle t \rangle^{1+s_1- s_2 } 	\|  W \|_{\dot  H^{s_2 }  }  
    \end{aligned} 
	\end{equation}
 and 
 \begin{equation}\label{eq:lemma_2}
\|     U \|_{ L^2  }   \lesssim    	\| \langle  Y\rangle W \|_{  L^2  }  .
\end{equation}

\end{lemma}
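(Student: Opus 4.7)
My plan is to work in Fourier variables. Let $k\in\Z$ and $\eta\in\R$ denote the Fourier duals to $X$ and $Y$. The chain rules $\partial_X\leftrightarrow ik$ and $\partial_Y-t\partial_X\leftrightarrow i(\eta-tk)$ give the symbol of $\Delta_t$ as $-(k^2+(\eta-tk)^2)$, so $\widehat{\Psi}(k,\eta)=-\widehat{W}(k,\eta)/(k^2+(\eta-tk)^2)$ and
\[
|\widehat{U}(k,\eta)| \;\leq\; \frac{|(k,\eta)|}{k^2+(\eta-tk)^2}\,|\widehat{W}(k,\eta)|.
\]

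For \eqref{eq:lemma_1}, by Plancherel the claim reduces to the pointwise multiplier bound
\[
m(k,\eta):=\frac{|(k,\eta)|^{1+s_1-s_2}}{k^2+(\eta-tk)^2}\;\lesssim\;\langle t\rangle^{1+s_1-s_2}\qquad \text{for } |k|\geq 1.
\]
Setting $\alpha:=1+s_1-s_2\in[0,2]$, I would combine two elementary lower bounds on the denominator: the trivial $k^2+(\eta-tk)^2\geq 1$ (from $|k|\geq 1$), and the shear comparison $k^2+(\eta-tk)^2\geq|(k,\eta)|^2/(2\langle t\rangle^2)$ (which follows from $\eta^2\leq 2(\eta-tk)^2+2t^2k^2$). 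Together these give $m\leq\min\{|(k,\eta)|^\alpha,\,2\langle t\rangle^2|(k,\eta)|^{\alpha-2}\}$; splitting cases on whether $|(k,\eta)|^2\leq 2\langle t\rangle^2$, one obtains $m\lesssim\langle t\rangle^\alpha$ in each regime. The hypothesis $|s_1-s_2|\leq 1$ is exactly what forces $\alpha\in[0,2]$, under which this convex interpolation is valid.

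For \eqref{eq:lemma_2}, the key structural observation is that $\widehat{U^Y}(k,\eta)\propto k$, so the $X$-zero mode of $U$ lives entirely in $U^X$, with $\widehat{U^X}(0,\eta)=i\widehat{W}(0,\eta)/\eta$. The mean-zero hypothesis $\int W=0$ forces $\widehat{W}(0,0)=0$; Hardy's inequality on the Fourier side ($\|g/\eta\|_{L^2}\lesssim\|g'\|_{L^2}$ for $g\in H^1(\R)$ with $g(0)=0$) then yields $\|\widehat{U^X}(0,\cdot)\|_{L^2(d\eta)}\lesssim\|\partial_\eta\widehat{W}(0,\cdot)\|_{L^2}=\|Y\bar W\|_{L^2}\leq\|\langle Y\rangle W\|_{L^2}$. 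For the non-zero $X$-modes I would combine the Fourier bound above with the additional control $\|\partial_\eta\widehat{W}(k,\cdot)\|_{L^2}=\|YW(k,\cdot)\|_{L^2}$; the symbol $|\eta|/(k^2+(\eta-tk)^2)$ peaks at height $\sim t/|k|$ along the resonance $\eta=tk$, and the weight $\langle Y\rangle$ on $W$ is what converts this peak into integrable data, via the Sobolev estimate $|\widehat{W}(k,tk)|^2\lesssim\|\widehat{W}(k,\cdot)\|_{L^2}\|\partial_\eta\widehat{W}(k,\cdot)\|_{L^2}$ from $H^1(\R_\eta)\hookrightarrow L^\infty$, followed by Cauchy--Schwarz in $k$. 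This non-zero-mode estimate is the main obstacle: naive pointwise bounds recover only \eqref{eq:lemma_1} at $s_1=s_2=0$ and lose a factor of $\langle t\rangle$, so taming the resonance via $YW\in L^2$ is where the hypothesis is used in an essential way, and where the argument departs from pure multiplier analysis.
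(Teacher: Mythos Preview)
Your argument for \eqref{eq:lemma_1} is correct and is essentially the paper's: both reduce to a Fourier multiplier bound on $P_{\neq 0}$, with the paper interpolating between the endpoints $\alpha=0$ (trivial since $k^2+(\eta-tk)^2\ge 1$) and $\alpha=2$ (via $\|\nabla^2\Delta_t^{-1}\|_{L^2\to L^2}\lesssim\langle t\rangle^2$, proved by passing back to $(x,y)$ coordinates) while you carry out the pointwise bound directly. Your treatment of the zero $X$-mode in \eqref{eq:lemma_2} via Hardy's inequality is also exactly what the paper does.

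The gap is in your non-zero-mode part of \eqref{eq:lemma_2}. The resonance argument you sketch does not yield a $t$-independent bound: even granting the $H^1_\eta\hookrightarrow L^\infty$ control $|\widehat{W}(k,tk)|^2\lesssim\|\widehat{W}(k,\cdot)\|_{L^2}\|\partial_\eta\widehat{W}(k,\cdot)\|_{L^2}$, the peak contribution $\int_{|\eta-tk|\lesssim|k|}\eta^2/(k^2+(\eta-tk)^2)^2\,d\eta\sim t^2/|k|$ still carries $t^2$, and Cauchy--Schwarz in $k$ cannot remove it. In fact no argument can: taking $\widehat{W}(1,\eta)=\chi(\eta-t)$ for a fixed bump $\chi$ gives $\|\langle Y\rangle W\|_{L^2}\sim 1$ while $\|P_{\neq 0}U^X\|_{L^2}\sim t$. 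The paper does not attempt this either---its proof simply writes $U=P_{\neq 0}U+P_{=0}U$, invokes ``the previous argument'' (i.e.\ \eqref{eq:lemma_1} at $s_1=s_2=0$, giving $\|P_{\neq 0}U\|_{L^2}\lesssim\langle t\rangle\|W\|_{L^2}$) for the first piece, and reserves Hardy for the second. In every downstream application of \eqref{eq:lemma_2} that $\langle t\rangle$ factor on the non-zero modes is absorbed into companion terms already carrying $\langle t\rangle^{\beta_s}$, so you should just cite \eqref{eq:lemma_1} for $P_{\neq 0}U$ rather than fight the resonance.
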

\begin{proof} 

\noindent
\textbf{Proof of \eqref{eq:lemma_1}}

Since $\Delta_t$ is constant-coefficient, we note the commutation $
	[\Delta_t , \Lambda^s ] = 	[\Delta_t , J^s ]  = 	   0
$.
Thanks to the projection $P_{\neq 0}$, by Plancherel $\|  \Delta^{-1}_t P_{\neq 0} W \|_{L^2  }  \lesssim     \|   W \|_{L^2  }$.

Therefore, by interpolation, it suffices to prove 
\begin{align} 
    \| \nabla^2 \Delta^{-1}_t  W \|_{L^2  } & \lesssim   \langle t \rangle^2  \|   W \|_{L^2  }  .\label{eq:lemma_aux2}
\end{align}

We now demonstrate \eqref{eq:lemma_aux2}. Recall $w(t,x,y)=W(x-ty,y)$ and $\Delta_t^{-1}W(X,Y)=\Delta^{-1}w(t,X+tY,Y)$.
By standard singular operator bounds $ \| \nabla_{x,y}^2 \Delta^{-1}   \|_{L^2_{x,y} \to L^2_{x,y}} \lesssim 1  $, and note that for any second order derivative of $XY$, $
   D_{XY}^\alpha =  t^2 C_{\alpha ,1} \p_x^2 + t^1 C_{\alpha ,2} \p_x\p_y + C_{\alpha ,3} \p_y^2$,
we obtain \eqref{eq:lemma_aux2}.

\noindent
\textbf{Proof of \eqref{eq:lemma_2}}

By writing $U =P_{\neq 0}     U +   P_{=0}U $ and the previous argument, it suffices to only consider the $x$-mean part. Since there is no $x$-variable, we have  
$$ 
\| P_{=0}U  \|_{L^2} = \| \p_y \Delta^{-1}   P_{=0}W  \|_{L^2}  =   \| |\p_Y|^{-1}  P_{=0} W \|_{L^2_Y}  .
$$
Note that $|\p_Y|^{-1}   P_{=0}W$ is well-defined since $\int_{\T \times \R} W (x,y) \,dx\, dy = \int_{\R} P_{=0}W(y) \, dy  = 0 $. Moreover, $ Y P_{=0}W \in L^2(\R)$ and $\widehat{P_{=0} W}(0)=0$, so Hardy's inequality gives
$$
\| |\p_Y|^{-1}   P_{=0} W \|_{L^2_Y}  \leq \|  \p_\xi  \widehat{P_{=0} W} \|_{L^2_\xi}\lesssim \|\p_{\xi} \widehat{W}\|_{L^2} \lesssim \|  Y  W \|_{L^2} .
$$

\end{proof}

The main step of the proof is the following a priori estimates.

\begin{lemma}\label{lemma:Hs a priori}
Let $s>1$. The system \eqref{eq:eu_XY} satisfies the a priori estimates:
\begin{equation}
\frac{	d \| W(t) \|_{H^s} }{dt} \lesssim \| \nabla U \|_{L^\infty }  \|   W \|_{H^s} +
\begin{cases}
\|  J^{ 2} U \|_{ L^2 }  \|   W \|_{H^s}    & \quad \text{1<s<2}\\
\|  J^{  2+\delta} U \|_{ L^2 }  \|   W \|_{H^s}  &\quad \text{ s=2} \\
\|  J^{ s} U \|_{ L^2 }  \|   W \|_{H^s}  &\quad \text{ s>2}.
\end{cases}   
\end{equation}

\end{lemma}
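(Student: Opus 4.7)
The plan is to run a standard $H^s$ energy estimate on the transported equation $\partial_t W + U\cdot\nabla_{X,Y} W = 0$. First I would apply $J^s$ and pair with $J^s W$ in $L^2_{X,Y}$:
\begin{equation*}
    \tfrac{1}{2}\tfrac{d}{dt}\|W\|_{H^s}^2 = -\int J^s W \cdot J^s(U\cdot\nabla W) \, dXdY.
\end{equation*}
Since $U=\nabla^\perp_{X,Y}\Psi$ is divergence free in $(X,Y)$, the leading transport piece $\int (U\cdot\nabla J^s W) J^s W \,dXdY = \tfrac{1}{2}\int U\cdot\nabla|J^s W|^2\,dX dY$ vanishes, and the question reduces to estimating the commutator $\|[J^s,U\cdot\nabla]W\|_{L^2}$ against the three cases of the stated right-hand side.

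Next I would use a Bony paraproduct decomposition $U\cdot\nabla W = T_U\nabla W + T_{\nabla W}U + R(U,\nabla W)$. The low-$U$-frequency paraproduct $T_U\nabla W$ (and the remainder $R$) carries the usual commutator cancellation and contributes $\|\nabla U\|_{L^\infty}\|W\|_{H^s}$. The delicate piece is the high-$U$-frequency paraproduct $T_{\nabla W}U$, for which I would bound the $H^s$-norm dyadically by Hölder: for any exponents with $1/p+1/q=1/2$,
\begin{equation*}
    \|J^s T_{\nabla W}U\|_{L^2}^2 \lesssim \sum_{j}\|S_{j-2}\nabla W\|_{L^p}^{2}\|J^s U_j\|_{L^q}^{2},
\end{equation*}
and then apply Bernstein on the $U$-dyadic blocks together with the 2D Sobolev embedding $\|\nabla W\|_{L^p}\lesssim\|W\|_{H^s}$ on the $W$-side.

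The case split of the claim corresponds to the choice of $(p,q)$. For $s>2$ one takes $p=\infty$, since $H^{s-1}\hookrightarrow L^\infty$ gives $\|\nabla W\|_{L^\infty}\lesssim\|W\|_{H^s}$ directly, and the classical Kato-Ponce commutator $\|[J^s,U\cdot\nabla]W\|_{L^2}\lesssim\|\nabla U\|_{L^\infty}\|W\|_{H^s}+\|J^s U\|_{L^2}\|\nabla W\|_{L^\infty}$ closes immediately. For $1<s<2$ one picks $p=2/(2-s)$, $q=2/(s-1)$: this is the unique Hölder pair for which $H^s\hookrightarrow W^{1,p}$ holds and the Bernstein powers $\|J^s U_j\|_{L^q}\lesssim 2^{2j}\|U_j\|_{L^2}$ recombine into $\|J^2 U\|_{L^2}$. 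At the endpoint $s=2$ the embedding $H^2\hookrightarrow W^{1,p}$ fails at $p=\infty$ but holds for every $p<\infty$; taking $p$ arbitrarily large costs an arbitrarily small $\delta>0$ in the Bernstein exponent, producing $\|J^{2+\delta}U\|_{L^2}\|W\|_{H^2}$.

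The main obstacle is the subcritical regime $1<s\leq 2$: the naive Kato-Ponce bound requires $\nabla W\in L^\infty$, which fails since $H^s\not\hookrightarrow W^{1,\infty}$ in two dimensions in this range. What saves the plan is the divergence-free Biot--Savart structure, which by Lemma \ref{lemma:U est} gains one derivative of $U$ over $W$: we can therefore afford to place up to two derivatives on $U$ in the high-$U$-frequency paraproduct, in exchange for only an $L^p$-control on $\nabla W$ with $p<\infty$, at the moderate cost of a $\langle t\rangle$-factor that will be absorbed later by the choice of lifespan $T_\ep$.
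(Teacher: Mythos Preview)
Your proposal is correct and follows essentially the same route as the paper: the paper applies the Kato--Ponce commutator estimate $\|[J^s,U\cdot\nabla]W\|_{L^2}\lesssim\|\nabla U\|_{L^\infty}\|J^s W\|_{L^2}+\|J^s U\|_{L^p}\|\nabla W\|_{L^q}$ as a black box and then makes exactly the same H\"older choices $(p,q)$ in the three regimes, whereas you unpack the paraproduct proof of that inequality explicitly. One small remark: your final paragraph invokes the Biot--Savart gain of Lemma~\ref{lemma:U est}, but that is not used in this lemma---the estimate here treats $U$ as a generic divergence-free field, and the conversion of $\|J^2 U\|_{L^2}$ into $\|W\|_{\bar H^s}$ with $\langle t\rangle$ factors happens only in the next lemma.
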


\begin{proof}
Since $U$ is divergence-free, when $\frac1p+\frac1q=\frac12$, by Kato-Ponce  we have
\begin{equation}\label{eq:aux katoponce}
\frac{	d \| W(t) \|_{H^s}^2 }{dt} \lesssim  \| \nabla U \|_{L^\infty }  \| J^{ s} W\|_{L^2}^2  + \| J^s  U \|_{L^p}  \| \nabla W \|_{L^q}   \| J^{ s} W\|_{L^2} .
\end{equation}
Here, the choice of $p,q$ depends on $s$, and we separate into three cases.

\noindent 
\textbf{Case 1: $1<s < 2$}

In this case we take $2< q = \frac{2}{2-s} <\infty $  and $2 < p = \frac{2}{s-1} < \infty $, and Sobolev embedding gives  
\begin{equation}
\begin{aligned}
\| J^s  U \|_{L^p}  \| \nabla W \|_{L^q}  \lesssim    \| J^2  U \|_{L^2}      \| J^{ s} W\|_{L^2}   .
\end{aligned}
\end{equation} 

\noindent 
\textbf{Case 2: $ s =2$}

For any $\delta>0$, we take $  p >2  $ close to $2$ so that $H^{\delta} (\T \times \R )\hookrightarrow  L^{p}(\T \times \R )$. So in \eqref{eq:aux katoponce}, there holds $ \| J^s  U \|_{L^p} \lesssim  \| J^{2+\delta}  U \|_{L^2}  $     for any $\delta>0$ and    $\| \nabla W \|_{L^q} \lesssim \| J^s W \|_{L^2}$, which concludes the proof.

\noindent 
\textbf{Case 3: $s >  2$}
 
 In this case, take $p=2$ and $q=\infty$. The estimate follows from  the embedding $  H^s(\T \times \R ) \hookrightarrow  L^\infty(\T \times \R )$ for $s>1$.

\end{proof}

We also need to track the second condition in \eqref{eq thm main 1}.

\begin{lemma}\label{lemma:a priori total}
Let $s>1$ and define the norm
$$
\|    f\|_{ \bar{H}^s } = \|   f\|_{  {H}^s } + \|   Y f   \|_{ L^2 }.
$$

If $\int_{\T \times \R} \om_{in} = 0 $ and $   y \om_{in} \in L^2 $, then the system \eqref{eq:eu_XY} admits the following a priori estimates:
\begin{equation}\label{eq prop:apriori_yw}
\frac{	d \|   W (t) \|_{ \bar{H}^s } }{dt} \lesssim \langle  t\rangle^{\beta_s} \|W \|_{\bar{H}^s}^2  ,
\end{equation} 
where the exponent $  \beta_s =3-s $ if $1<s<2 $, $  \beta_s =1+\delta $ for arbitrary $\delta>0$  if $s=2 $, and $  \beta_s =1 $ if $s > 2 $.

\end{lemma}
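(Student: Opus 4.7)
The plan is to derive a differential inequality separately for each piece of the norm $\|W\|_{\bar H^s} = \|W\|_{H^s} + \|YW\|_{L^2}$ and then add them. The $H^s$ piece follows by feeding Lemma~\ref{lemma:Hs a priori} into the velocity-vorticity dictionary of Lemma~\ref{lemma:U est}; the weighted $L^2$ piece is a direct energy identity that uses only $\nabla_{X,Y}\cdot U = 0$.

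For the $H^s$ piece, Lemma~\ref{lemma:Hs a priori} reduces matters to estimating $\|J^\alpha U\|_{L^2}$, with $\alpha = 2,\, 2+\delta,\, s$ in the three regimes, together with the coefficient $\|\nabla U\|_{L^\infty}$. For both I would split $U = P_{\neq 0}U + P_{=0}U$. On the mean-free part, \eqref{eq:lemma_1} with $s_1 = \alpha$ and $s_2 = s$ yields exactly the factor $\langle t\rangle^{1+\alpha-s}$, which reads $\langle t\rangle^{3-s}$, $\langle t\rangle^{1+\delta}$, $\langle t\rangle^{1}$ in the three regimes; this is the origin of $\beta_s$. The $x$-mean part has $P_{=0}U^Y \equiv 0$, while the $P_{=0}U^X$ contribution is controlled by $\|YW\|_{L^2}$ via \eqref{eq:lemma_2}. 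The coefficient $\|\nabla U\|_{L^\infty}$ I would bound via the 2D Sobolev embedding $H^{1+\delta'} \hookrightarrow L^\infty$, reducing to $\|U\|_{H^{2+\delta'}}$, which is again controlled by Lemma~\ref{lemma:U est}.

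For the weighted piece, using $\partial_t W = -U\cdot\nabla W$ and integrating by parts,
$$
\frac{d}{dt}\|YW\|_{L^2}^2 = -2\int Y^2 W\, U\cdot\nabla W\,dX\,dY = 2\int Y\, U^Y\, W^2 \, dX\,dY,
$$
where the last step uses $\nabla_{X,Y}\cdot U = 0$. Cauchy-Schwarz then gives
$$
\frac{d}{dt}\|YW\|_{L^2} \lesssim \|U^Y\|_{L^\infty}\|W\|_{L^2}.
$$
Bounding $\|U^Y\|_{L^\infty} \lesssim \|U^Y\|_{H^{1+\delta'}}$ and reapplying Lemma~\ref{lemma:U est} with $s_1 = 1+\delta'$ produces a factor $\langle t\rangle^{2+\delta'-s}$, strictly smaller than $\beta_s$ in all three regimes; this piece is therefore subordinate. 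Adding the two inequalities gives \eqref{eq prop:apriori_yw}.

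The main obstacle is the endpoint failure $H^2\not\hookrightarrow L^\infty$ in two dimensions. For $1 < s < 2$ the $\|\nabla U\|_{L^\infty}$ term in Lemma~\ref{lemma:Hs a priori} is not controlled by $\|J^2 U\|_{L^2}$ alone, so one is forced to raise the Sobolev exponent by an arbitrarily small $\delta'$, producing a matching $\delta'$ loss in the power of $\langle t\rangle$. This loss is harmless at the level of Theorem~\ref{thm:main}---it only tightens the constants $\ep_0$ and $c_s$---but must be controlled carefully if one wishes to match $\beta_s = 3-s$ exactly as stated, for instance via a Brezis-Gallouet-type logarithmic refinement of the $L^\infty$ bound.
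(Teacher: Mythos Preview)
Your overall architecture---handle $\|W\|_{H^s}$ through Lemma~\ref{lemma:Hs a priori}, handle $\|YW\|_{L^2}$ by a direct energy identity, and feed both into Lemma~\ref{lemma:U est} after a zero/nonzero mode split---is exactly the paper's. The weighted piece and the regimes $s\geq 2$ go through as you describe.

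The gap you flag for $1<s<2$ is genuine, and the Brezis--Gallouet patch you suggest does not close it: that would leave a residual $\log\langle t\rangle$ factor, yielding $\beta_s = 3-s$ only up to a logarithm, which is weaker than the lemma claims. The paper avoids the endpoint $H^2\not\hookrightarrow L^\infty$ by a Gagliardo--Nirenberg interpolation on $\nabla U$:
\[
\|\nabla U\|_{L^\infty}\ \lesssim\ \|U\|_{\dot H^{s+1}}^{1/2}\,\|U\|_{\dot H^{3-s}}^{1/2},
\]
valid in two dimensions precisely because the exponents $s$ and $2-s$ straddle the critical index $1$. Applying \eqref{eq:lemma_1} with $(s_1,s_2)=(s+1,s)$ and then with $(s_1,s_2)=(3-s,s)$---the constraint $-1\le s_1-s_2\le 1$ becomes exactly $1\le s\le 2$ in the second application---gives time factors $\langle t\rangle^{2}$ and $\langle t\rangle^{4-2s}$ respectively; their geometric mean is $\langle t\rangle^{3-s}$ with no $\delta'$ loss.

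One minor imprecision: \eqref{eq:lemma_2} only bounds $\|P_{=0}U\|_{L^2}$, not its higher derivatives. For the $\dot H^\alpha$ part of $J^\alpha P_{=0}U$ one uses instead $\partial_Y P_{=0}U^X = -P_{=0}W$, so that $\|P_{=0}U\|_{\dot H^\alpha} = \|P_{=0}W\|_{\dot H^{\alpha-1}} \lesssim \|W\|_{H^s}$ with no time growth.
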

\begin{proof}
To show \eqref{eq prop:apriori_yw} we first compute the evolution of $\|  Y   W (t) \|_{L^2}$. Multiply by $  Y^2 W$ and integrate by parts,
   \begin{eqnarray}\label{eq aux apriori 1}
 \frac{	d \|  Y   W (t) \|_{L^2}^2 }{dt} \leq \| U^Y \|_{L^\infty } \|   Y   W  \|_{L^2}\|      W  \|_{L^2} .
   \end{eqnarray}
Since $\| U^Y \|_{L^\infty } \lesssim \| P_{\neq 0} U \|_{H^s } $ for $s>1$, by \eqref{eq:lemma_1} we have
   \begin{align}\label{eq aux apriori 1b}
 \frac{	d \|   Y   W (t) \|_{L^2}  }{dt}   
 & \leq \langle  t\rangle     \|      W  \|_{H^s}^2 .
   \end{align}
In the rest, we focus on the $H^s$ part, bounding the estimates given by Lemma \ref{lemma:Hs a priori}.

\noindent 
\textbf{Case 1: $1<s \leq 2$}

Since $ 2-s<1<s$, we use the interpolation  
\begin{equation}\label{eq aux apriori 2}
  \| \nabla U \|_{L^\infty } \lesssim \|  U \|_{\dot H^{s+1} }^\frac{1}{2}  \|  U \|_{\dot H^{3-s} }^\frac{1}{2}   .
\end{equation}
Separating zero and non-zero $x$-modes, by Lemma \ref{lemma:U est} we have
\begin{align}
\|  U \|_{\dot H^{s+1} }& \leq \|  P_{ =0 }  U \|_{\dot H^{s+1} }  + \| P_{ \neq 0 }  U \|_{ \dot H^{s+1} }   \nonumber\\ 
& \lesssim \|P_{ =0 } W\|_{\dot H^{s } } + \langle t\rangle^2 \| W \|_{ \dot  H^{s } }   \lesssim    \langle t\rangle^2 \| W \|_{   H^{s } } , \label{eq aux apriori 3}
\end{align}
and similarly (since $-1 \le 3-2s \le 1$)
\begin{align}
\|  U \|_{\dot H^{3-s} }  & \lesssim \|P_{ =0 } W\|_{\dot H^{2-s } } + \langle t\rangle^{4-2s} \| W \|_{  H^{s } } \nonumber\\
& \lesssim  \langle t\rangle^{4-2s} \| W \|_{  H^{s } } .\label{eq aux apriori 4}
\end{align}

It follows from \eqref{eq aux apriori 2}, \eqref{eq aux apriori 3} and \eqref{eq aux apriori 4} that
 \begin{equation}\label{eq aux apriori 5}
\begin{aligned}
\| \nabla U \|_{L^\infty }& \lesssim      \langle  t\rangle^{3- s}      \| W(t) \|_{H^s}.
\end{aligned}
\end{equation} 
 Next, we consider the $J^2U$ term ($J^{2+}U$ when $s=2$ is similar).
\begin{equation}\label{eq aux apriori 6}
\| J^2  U \|_{L^2} \lesssim \|   U \|_{L^2} + \|    U \|_{\dot H^2} \lesssim \| \langle  Y\rangle W \|_{  L^2  }   + \langle  t\rangle^{3- s}      \| W(t) \|_{H^s}.
\end{equation}

Combining \eqref{eq aux apriori 1b}, \eqref{eq aux apriori 5} and \eqref{eq aux apriori 6} with Lemma \ref{lemma:Hs a priori}, we have:
\begin{equation*}
\frac{	d \| W(t) \|_{\bar H^s} }{dt}    
\begin{cases}
\lesssim  \langle  t\rangle^{3- s}      \| W(t) \|_{\bar H^s}^2 \quad & \text{if $1<s<2$}\\
\lesssim_\delta  \langle  t\rangle^{1+\delta}      \| W(t) \|_{\bar H^s}^2 \quad & \text{if $ s =2$}.
\end{cases}
\end{equation*}

\noindent 
\textbf{Case 2: $ s >2$}

In this case, we also split into zero and non-zero $x$-modes and then use Sobolev embedding to obtain
\begin{align}
  \| \nabla U \|_{L^\infty } &\lesssim \| \nabla U \|_{  H^{s -1} }      \lesssim \| \nabla P_{= 0 } U \|_{  H^{s -1} } + \langle t \rangle  \| W \|_{ H^{ s} } \nonumber \\
  & \lesssim \|  P_{= 0 } W \|_{  H^{s -1} } + \langle t \rangle  \| W \|_{H^{ s} } \lesssim   \langle t \rangle  \| W \|_{  H^{ s} } .\label{eq aux apriori 7}
\end{align}
and 
\begin{equation}\label{eq aux apriori 8}
  \| J^s U \|_{L^2 } \lesssim \| U \|_{L^2 } +  \| \Lambda^s U \|_{L^2 } \lesssim \| \langle  Y\rangle W \|_{  L^2  }    + \langle t \rangle  \| W \|_{ H^{ s} }.
\end{equation}

Similarly to the Case 1, by \eqref{eq aux apriori 7}, \eqref{eq aux apriori 8} and Lemma \ref{lemma:Hs a priori} and we obtain
\begin{equation*}
\begin{aligned}
\frac{	d \| W(t) \|_{\bar H^s} }{dt}     
& \lesssim     \langle  t\rangle      \| W(t) \|_{\bar H^s}^2.
\end{aligned}
\end{equation*}

\end{proof}

\subsection{Long-time regularity of $W$}
With Lemma \ref{lemma:a priori total} established, we obtain the bound 
$$
\| W(t) \|_{\bar H^s} \leq \frac{1}{ (2\ep)^{-1} -C_s t^{1+ \beta_s} } .  
$$ 

In particular, if $c_s>0$ is sufficiently small and $T_\ep = c_s \ep^{ - \delta_s }$, then  there holds
\begin{equation}
\| W \|_{L^\infty ([0,T_\ep]; \bar  H^s)} \leq 3 \ep .
\end{equation}

\subsection{Long-time damping estimates}

Once we have the $H^s$ regularity of $W$, it is standard to derive inviscid damping estimates. See, for instance \cite{MR2796139}. Note that the lemma below is ``frozen-time''   and does not use the equation \eqref{eq:eu_XY}.

\begin{lemma}\label{lemma:damping}
For any $s \geq 0 $ and any $\om(x,y): = W(x -yt, y)$, there holds
\begin{equation}\label{eq:damping}
	\| P_{ \neq 0 } \om \|_{\dot H^{-s}  }   \lesssim  
		\langle t \rangle^{-s}  \|     W \|_{L^2_X H^s_Y  }   
\end{equation}		 
\end{lemma}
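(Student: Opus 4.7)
\medskip

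\noindent
\textbf{Proof plan for Lemma \ref{lemma:damping}.}
The statement is a purely frequency-side assertion about how shearing $(X,Y)\mapsto(X-Yt,Y)$ moves mass around in Fourier space, so my plan is to reduce everything to a pointwise multiplier comparison via Plancherel. First, I would observe that if $\omega(x,y)=W(x-yt,y)$, then partial Fourier transform in $x$ gives $\widehat{\omega}(k,y)=e^{-ikyt}\widehat{W}(k,y)$, and a further Fourier transform in $y$ produces the shear identity
\begin{equation*}
\widehat{\omega}(k,\eta)=\widehat{W}(k,\eta+kt).
\end{equation*}
Expressing both sides of \eqref{eq:damping} via Plancherel (a sum over $k\in\Z$ and an integral in $\eta\in\R$) and changing variables $\eta\mapsto\eta-kt$, the desired estimate reduces to showing
\begin{equation*}
\sum_{k\neq 0}\int (|k|^2+|\eta-kt|^2)^{-s}\,|\widehat{W}(k,\eta)|^2\,d\eta \;\lesssim\; \langle t\rangle^{-2s}\sum_{k\neq 0}\int \langle\eta\rangle^{2s}|\widehat{W}(k,\eta)|^2\,d\eta.
\end{equation*}

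Thus the whole lemma boils down to the pointwise multiplier bound, valid for every $k\in\Z\setminus\{0\}$ and every $\eta\in\R$,
\begin{equation*}
\langle t\rangle^{2s}\;\lesssim\;\langle\eta\rangle^{2s}\,(|k|^2+|\eta-kt|^2)^{s}.
\end{equation*}
Taking the $2s$-th root (the case $s=0$ is trivial), this is the same as $\langle t\rangle\lesssim \langle\eta\rangle\cdot(|k|^2+|\eta-kt|^2)^{1/2}$. I would verify it by splitting into two regimes according to whether the shifted frequency $\eta$ has absorbed the shear or not. If $|\eta|\ge |kt|/2$, then using $|k|\ge 1$ one has $\langle\eta\rangle\gtrsim 1+|kt|/2\gtrsim \langle t\rangle$, and the second factor is $\ge |k|\ge 1$, so the product dominates $\langle t\rangle$. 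If instead $|\eta|<|kt|/2$, the triangle inequality gives $|\eta-kt|\ge |kt|/2\ge |t|/2$, so the second factor alone is $\gtrsim \langle t\rangle$ (combined with the trivial lower bound $\ge 1$), while $\langle\eta\rangle\ge 1$.

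The non-trivial input is really just the projection $P_{\neq 0}$, which forces $k\ne 0$ and hence $|k|\ge 1$; without this, neither regime would yield any gain in $t$. I expect no genuine obstacle: this is a one-page frequency-space computation, and all the estimates in both regimes are sharp up to the implicit constant. Once the pointwise bound is established, plugging it into the Plancherel identity above concludes the proof.
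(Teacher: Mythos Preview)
Your proof is correct. The Fourier-side approach is a genuine alternative to the paper's argument: the paper works by duality in physical space, writing $\|P_{\neq 0}\omega\|_{\dot H^{-1}} = \sup_{\varphi} \int \omega\varphi$, introducing the antiderivative $\phi = \partial_x^{-1}\varphi$, and exploiting the identity $\varphi(X+tY,Y) = t^{-1}\partial_Y\big(\phi(X+tY,Y)\big) - t^{-1}(\partial_y\phi)(X+tY,Y)$ to integrate by parts once per unit of $s$. This is carried out explicitly for integer $s$, with the general case handled by a high/low $Y$-frequency splitting. Your pointwise multiplier bound is cleaner in that it treats all $s\ge 0$ uniformly in a single stroke and makes the role of $P_{\neq 0}$ (forcing $|k|\ge 1$) completely transparent; the paper's version, on the other hand, stays entirely in physical-space duality language and never writes down a Fourier multiplier. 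The two proofs are really the same shear identity viewed from opposite sides: your relation $\widehat{\omega}(k,\eta)=\widehat{W}(k,\eta+kt)$ is precisely the Fourier dual of the paper's integration-by-parts trick.
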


We conclude that the decay estimate \eqref{eq:damping intro} follows from Lemma \ref{lemma:damping}; for $u^x$,
\begin{align*}
\| P_{\neq 0 } u^x (t)\|_{L^2}   \lesssim   \|   P_{\neq 0 } \om \|_{\dot H^{-1}}  
    \lesssim \langle t\rangle^{-1} \|W \|_{L^\infty_t([0,T_\ep]; H^1)}
\end{align*}
while for $u^y$ (applying Lemma \ref{lemma:damping} for $\p_x \om $ and $\p_X W$),
\begin{align*}
\|   u^y (t)\|_{L^2}    = \| P_{\neq 0 } \Delta^{-1}_t ( \p_X W) \|_{L^2}   \lesssim  \langle t \rangle^{- \min \{ 2,s\}}   \|W \|_{L^\infty_t([0,T_\ep];H^s) } 
\end{align*}
for any $s>1$ since the extra $\p_X$ can be absorbed in $H^s$. We have thus proven Theorem \ref{thm:main}

Finally, we prove Lemma \ref{lemma:damping}. 

\begin{proof}[Proof of Lemma \ref{lemma:damping}]
We demonstrate  \eqref{eq:damping} for integers $s$; the general case follows from separating the high and low $Y$-frequencies. It also suffices to consider $t\geq 1$.

\noindent 
\textbf{Case 1: $ s =1$}

By standard duality, it suffices to bound
\begin{equation}
	\| P_{ \neq 0 } \om \|_{\dot H^{-1}  } = \sup_{ 
    |\varphi \|_{\dot H^1} \leq 1 ; P_{=0} \varphi  = 0} \int \om \varphi \, dx dy.
\end{equation}
So we fix such a smooth test function $\varphi \in C^\infty_c(\T \times \R)$ with zero $x$-mean and consider
\begin{equation}\label{eq:om_int}
	\int \om \varphi \, dx dy =\int W  (X, Y )\varphi( X+ t Y, Y) \,dX dY.
\end{equation}
Because $\varphi$ has zero x-mean and periodic in $x$, there exists a   unique zero x-mean function $ \phi= \p_x^{-1} \varphi$.  
Then we use the identity
\begin{equation}\label{eq:om_int 2}
	\varphi(\cdot) = \frac{1}{t}\p_Y \left( \phi(\cdot) \right) - \frac{1}{t}\p_y \phi(\cdot)
\end{equation}
where the argument $(\cdot) = (X+ t Y, Y )$ is omitted.
Inserting this into \eqref{eq:om_int} gives
\begin{align}
	\int \om \varphi \, dx dy & =-\frac{1}{t} \int \p_Y W  \phi(\cdot) \,dX dY \nonumber  -\frac{1}{t} \int  W  \p_y \phi(\cdot) \,dX dY \nonumber \\
	& \leq  t^{-1} \| \p_Y W \|_{L^2 }  \| \phi \|_{L^2}  +   t^{-1}  \|    W \|_{L^2 }  \| \p_y \phi \|_{L^2 } . \label{eq:aux10} 
\end{align}
Note that $\left| \widehat{\phi}(k,\xi) \right|=\left| \frac{\widehat{\varphi}(k,\xi)}{k} \right|$, we have $\| \phi \|_{H^1} \lesssim  \|  \varphi \|_{\dot H^1}  $, which gives
\begin{align}
	\Big|\int \om \varphi \, dx dy \Big| & \lesssim \frac{1}{t}  \|   W \|_{L^2_X  H^1_Y}  \|   \varphi \|_{\dot H^1 }  \nonumber .
\end{align}

\noindent 
\textbf{Case 2: $ \N \ni s \geq 2$}

 The proof is nearly the same. We just need to integrate by parts repeatedly for the two terms on the right-hand side of \eqref{eq:aux10} using \eqref{eq:om_int 2}.

\end{proof}

\subsection*{Acknowledgment}

We thank Siming He for helpful comments and references. XL is grateful to Qingtang Su for introducing him the topic.

\appendix

\bibliographystyle{alpha}
\bibliography{fluid_illposed.bib}

\end{document}